\def\cocoa{{\hbox{\rm C\kern-.13em o\kern-.07em C\kern-.13em o\kern-.15em A}}}
\newtheorem{theorem}{Theorem}[section]
 \newtheorem{proposition}[theorem]{Proposition}
 \theoremstyle{definition}
 \newtheorem{definition}[theorem]{Definition}
 \theoremstyle{remark}
 \newtheorem{remark}[theorem]{Remark}
  \definecolor{MyDarkGreen}{cmyk}{0.7,0,1,0}
\author[J.~Migliore]{Juan Migliore}
\address{Department of Mathematics, University of Notre Dame, Notre Dame, IN 46556}
\email{migliore.1@nd.edu}
\author[F.\ Zanello]{Fabrizio Zanello}
\address{Department of Mathematical Sciences, Michigan Tech, Houghton, MI 49931}
\email{zanello@mtu.edu}
\title[Stanley's nonunimodal Gorenstein $h$-vector is optimal]{Stanley's nonunimodal Gorenstein $h$-vector is optimal}
\begin{document}

\begin{abstract} 
We classify all possible $h$-vectors of graded artinian Gorenstein algebras in socle degree 4 and codimension $\leq 17$, and in socle degree 5 and codimension $\leq 25$. We obtain as a consequence that the least number of variables allowing the existence of a nonunimodal Gorenstein $h$-vector is 13 for socle degree 4, and 17 for socle degree 5. 
In particular, the smallest nonunimodal Gorenstein $h$-vector is $(1,13,12,13,1)$, which was constructed by Stanley in  his 1978 seminal paper on level algebras. This solves a long-standing open question in this area. All of our results are characteristic free.
\end{abstract}

\keywords{Gorenstein $h$-vector, unimodality, Hilbert function, Macaulay's theorem, $O$-sequence, artinian algebra}
\subjclass[2010]{Primary: 13D40; Secondary: 13H10, 13E10, 05E40}

\maketitle

\section{Introduction}

It is a long-standing open problem in combinatorial commutative algebra to provide a classification of all possible artinian Gorenstein $h$-vectors. Since producing an explicit characterization seems hopeless for algebras of codimension $\geq 5$ (i.e., in five or more variables), much research over the years has been devoted to trying to determine conditions under which certain Gorenstein $h$-vectors can be nonunimodal (see, as a  nonexhaustive list, \cite{AS, BI, B, BL, BZ, BE, H, IS, MNZ1, MNZ3, MNZ2, SS, stanley,St2,zanello1,zanello3}). Recall that a sequence of integers is \emph{unimodal} if it does not strictly increase after a strict decrease. (For the importance of unimodality in algebra, combinatorics and related areas, see for instance the two classical surveys of Stanley \cite{St0} and Brenti \cite{Bre}.) In particular, today we know that nonunimodal Gorenstein $h$-vectors exist in any codimension $r\ge 5$ \cite{BI}, while when $r=3$, all Gorenstein $h$-vectors need to be unimodal (in fact, a complete characterization is known; see  Stanley \cite{stanley}, whose proof was based on the structure theorem of Buchsbaum and Eisenbud \cite{BE}, and then the second author \cite{zanello1} for a combinatorial proof). It is still open whether nonunimodal Gorenstein $h$-vectors exist when $r=4$ (see \cite{IS,MNZ3,SS} for some progress, mainly over a field of characteristic zero).

The first example of a nonunimodal Gorenstein $h$-vector, namely $(1,13,12,13,1)$, which has {socle degree} 4 (i.e., length 5), was produced by Stanley \cite{stanley}, using the technique of \emph{trivial extensions}, also introduced in that paper and useful in some of our proofs here. Notice that  4 is the smallest socle degree allowing the existence of a nonunimodal Gorenstein $h$-vector, because Gorenstein $h$-vectors are symmetric. Following Stanley \cite{stanley}, we denote  by $f(r)$ the least positive integer such that $(1,r,f(r),r,1)$ is a Gorenstein $h$-vector. These authors in collaboration with Nagel showed in \cite{MNZ1} that $f(r)$ is asymptotic to $(6r)^{2/3}$ for $r\rightarrow \infty$, thus solving a long-standing conjecture of Stanley \cite{St2}. Further, it was proven in \cite{zanello3} that $(1,r,a,r,1)$ is a  Gorenstein $h$-vector for all $a=f(r),f(r)+1,\dots,\binom{r+1}{2}$. However, except for special values of $r$, in general the integer $f(r)$ is not known. Until now, a basic open problem in this area was to determine the least integer $r$ such that $f(r)<r$, i.e., the least codimension $r$ allowing the existence of nonunimodal Gorenstein $h$-vectors in socle degree 4. (It is easy to see that if  a nonunimodal Gorenstein $h$-vector exists in codimension $r$, then one must also exist in all larger codimensions.)

Several incremental results, both characteristic-free or with assumptions on the characteristic (see for instance \cite{AS,BZ,MNZ1, stanley}) finally  left open only the existence of $(1,12,11,12,1)$ as a possible  Gorenstein $h$-vector, in order to show that  all Gorenstein $h$-vectors of socle degree 4 and codimension $r\le 12$ must be unimodal. In this paper we settle this problem, by showing that $(1,12,11,12,1)$ cannot be Gorenstein in any characteristic. Our method is a refinement of the approach begun by Stanley and improved upon by the authors mentioned above. As a consequence, this also proves that Stanley's $(1,13,12,13,1)$ original example is the smallest possible. Furthermore, using our method we  obtain a full classification of all Gorenstein $h$-vectors with socle degree 4 and codimension $\leq 17$. 

In socle degree 5, the work of several authors culminated in Theorem 3.3 of \cite{AS}, which showed that all socle degree 5 Gorenstein $h$-vectors of codimension $\leq 15$ are unimodal. The authors of \cite{AS} also pointed out that $(1,18,16,16,18,1)$ is a Gorenstein $h$-vector, while $(1,16,14,14,16,1)$ is not. This left open the existence of the following three $h$-vectors of codimension $r\le 17$: $H^{(1)}=(1,16,15,15,16,1)$, $H^{(2)}=(1,17,15,15,17,1)$, and $H^{(3)}=(1,17,16,16,17,1)$. In this note, as an application of our method, we prove that $H^{(1)}$ and $H^{(2)}$ cannot be Gorenstein, while we see using trivial extensions that $H^{(3)}$ is. Therefore, the least codimension allowing the existence of a nonunimodal Gorenstein $h$-vector in socle degree 5 is 17. Furthermore, we obtain a complete classification of Gorenstein $h$-vectors for socle degree 5 in codimension $\leq 25$.

Our results in this paper are  characteristic free.

\section{Preliminary facts} 

In this section, we briefly gather the main definitions and results needed in this paper. Let $A=\bigoplus_{i \geq 0} A_i=R/I$ be a standard graded $k$-algebra, where $R = k[x_1, \dots, x_r]$, $I$ is a homogeneous ideal of $R$, and $k$ is any infinite field.   The {\em Hilbert function} of $A$ in degree~$i$ is $h_{R/I}(i) = h_i = \dim_k A_i$. When $A$ is \emph{artinian} (i.e., it has Krull-dimension zero), the Hilbert function of $A$ is eventually zero and can be identified with its \emph{$h$-vector}, namely $h=(h_0=1,h_1,\dots, h_{e-1},h_e>0)$.   Hence we will sometimes use the two terms interchangeably. The integer $e$ is called the \emph{socle degree} of $A$, and since we may assume without loss of generality that $I$ contains no nonzero forms of degree 1, the number of variables $r$ equals $h_1$ and is called the \emph{codimension} of $A$. 

Define the \emph{socle} of an  algebra $A$ of depth zero (for us, usually artinian) as the annihilator of the maximal homogeneous ideal $\overline{m}=(\overline{x_1},\dots,\overline{x_r})\subseteq A$, namely $soc(A)=\lbrace a\in A {\ } \mid {\ } a\overline{m}=0\rbrace $. The \emph{socle vector} of $A$ is $s=(s_0,s_1,\dots,s_e)$, where $s_i=\dim_k soc(A)_i$. Notice that $s_0=0$ and $s_e=h_e>0$.

If $s=(0,\dots,0,s_e=t)$, we say that $A$ is \emph{level (of type $t$)}. In particular, if $t=1$, $A$ is \emph{Gorenstein}. With a slight abuse of notation, we will refer to an $h$-vector as Gorenstein (or level) if it is the $h$-vector of a Gorenstein (or level) artinian algebra.

The following three basic results are due to Macaulay, Gotzmann, and Green; before stating them, we need to recall the following definition.

\begin{definition} Let $n$ and $i$ be positive integers. The {\em $i$-binomial expansion of $n$} is 
\[
n_{(i)} = \binom{n_i}{ i}+\binom{n_{i-1}}{ i-1}+...+\binom{n_j}{ j},
\]
 where $n_i>n_{i-1}>...>n_j\geq j\geq 1$. Such an expansion always exists and is unique (see, e.g.,  \cite{BH}, Lemma 4.2.6). Following \cite{BH}, we define, for any integers $a$ and $b$,
$$
(n_{(i)})_{a}^{b}=\binom{n_i+b}{ i+a}+\binom{n_{i-1}+b}{ i-1+a}+...+\binom{n_j+b}{ j+a},
$$
where we set $\binom{m}{ c}=0$ whenever $m<c$ or $c<0$.
\end{definition}

\begin{theorem}\label{MGG} Let $A=R/I$ be a standard graded $k$-algebra, and $L \in A$  a general linear form (according to the Zariski topology). Denote by $h_d$  the degree $d$ entry of the Hilbert function of $A$ and by $h_d^{'}$ the degree $d$ entry of the Hilbert function of $A/(L)$. Then:

\begin{itemize}

\item[(i)] {\em (Macaulay)} $$h_{d+1}\leq ((h_d)_{(d)})_{+1}^{+1}.$$\indent

\item[(ii)] {\em (Gotzmann)} 
If $h_{d+1} = ((h_d)_{(d)})^{+1}_{+1}$ and $I$ is generated in degrees $\leq d+1$, then 
\[
h_{d+s} = ((h_d)_{(d)})^s_s \hbox{\hspace{.3cm} for all $s \geq 1$}.
\]

\item[(iii)] {\em (Green)} $$h_d^{'}\leq ((h_d)_{(d)})_{0}^{-1}.$$
\end{itemize}
\end{theorem}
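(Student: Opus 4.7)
These are three classical foundational results—Macaulay's bound, Gotzmann's persistence theorem, and Green's theorem—each well documented in the literature. My plan is to cite the standard sources rather than reprove them: (i) is Theorem~4.2.10 of Bruns--Herzog \cite{BH}; (ii) is Gotzmann's persistence theorem, also treated there; and (iii) is Green's original theorem. Since the rest of the paper uses these as black boxes, the appropriate ``proof'' is a statement of attribution with precise references.

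Were I to sketch a self-contained argument, the unifying tool is the theory of \emph{lex-segment ideals}: monomial ideals whose graded pieces consist of initial segments in the lexicographic order. For (i), the plan is to reduce from an arbitrary homogeneous ideal $I$ to its generic initial ideal, which preserves the Hilbert function and is Borel-fixed, and thus dominates the lex-segment ideal with the same Hilbert function through degree $d$. A direct binomial calculation on the lex-segment then produces the bound $((h_d)_{(d)})_{+1}^{+1}$.

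For (ii), the approach is to show that equality in Macaulay's bound forces the generic initial ideal to agree with the corresponding lex-segment in degree $d+1$; the hypothesis that $I$ is generated in degrees $\leq d+1$ then propagates this equality to all higher degrees. The cleanest route invokes Gotzmann's regularity theorem as a black box, controlling Castelnuovo--Mumford regularity by the Gotzmann number and forcing the Hilbert polynomial to be $((h_d)_{(d)})^s_s$. For (iii), the argument is parallel: in generic coordinates, quotienting by a general linear form corresponds to setting the smallest variable equal to zero in monomial generators, and the resulting count of surviving lex monomials yields $((h_d)_{(d)})_{0}^{-1}$.

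The main obstacle in a hands-on proof is the reduction-to-lex-segment step, which requires care with generic initial ideals in positive characteristic. However, all three theorems hold in any characteristic (as needed here, since the main results of the paper are characteristic-free), and this technical point is handled uniformly in \cite{BH}. For the purposes of this paper I would simply cite \cite{BH} together with the original papers of Macaulay, Gotzmann, and Green, and move on to applying the theorem in subsequent sections.
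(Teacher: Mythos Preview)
Your proposal is correct and matches the paper's approach exactly: the paper's proof consists solely of citations, pointing to \cite{BH}, Theorem~4.2.10 or \cite{Ma} for (i), \cite{BH}, Theorem~4.3.3 or \cite{Go} for (ii), and \cite{Gr}, Theorem~1 for (iii). Your additional sketch via lex-segment and generic initial ideals is a reasonable elaboration, but is not present in the paper itself.
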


\begin{proof}
For part (i) see \cite{BH}, Theorem 4.2.10, or \cite{Ma}. For (ii) see \cite{BH}, Theorem 4.3.3, or \cite{Go}. For (iii) see \cite{Gr}, Theorem 1. 
\end{proof}

A sequence of nonnegative integers $h = (1,h_1,h_2, \dots , h_i ,\dots )$ is said to be an {\em $O$-sequence} if it satisfies Macaulay's theorem for all $i$.  Recall that when $A$ is artinian and Gorenstein, then its Hilbert function (or $h$-vector) is a finite, symmetric $O$-sequence. 

We recall a useful theorem proven in \cite{Za4}, Theorem 3.5.

\begin{theorem}\label{F socle}
Let $h_{d-1}$, $h_d$ and $h_{d+1}$ be three integers such that $((h_d)_{(d)})^{-1}_{-1} = h_{d-1}$ and $((h_d)_{(d)})^{+1}_{+1} = h_{d+1}$. Suppose that $h_{d-1} + \alpha$, $h_d$ and $h_{d+1}$, for some integer $\alpha > 0$, are the entries of degree $d-1$, $d$ and $d+1$ of the $h$-vector of an  algebra $A$. Then $A$ has depth zero and  an $\alpha$-dimensional socle in degree $d-1$.
\end{theorem}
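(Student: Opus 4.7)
The plan is to analyze the effect of multiplying by a general linear form $L\in A_1$ on $A$. From the standard exact sequence
\[
0 \longrightarrow (0:L)_{i-1} \longrightarrow A_{i-1} \xrightarrow{\;\cdot L\;} A_i \longrightarrow (A/LA)_i \longrightarrow 0,
\]
writing $H_i=\dim_k A_i$, $H'_i=\dim_k(A/LA)_i$, and $s'_i=\dim_k(0:L)_i$, we obtain the key identity $H'_i=H_i-H_{i-1}+s'_{i-1}$. By hypothesis, $H_{d-1}=h_{d-1}+\alpha$, $H_d=h_d$, and $H_{d+1}=h_{d+1}$.

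First I would extract the numerical identities that package both hypotheses. Writing $h_d=\sum \binom{n_i}{i}$ in $d$-binomial expansion and applying Pascal $\binom{n_i}{i}=\binom{n_i-1}{i}+\binom{n_i-1}{i-1}$ termwise, the assumption $((h_d)_{(d)})^{-1}_{-1}=h_{d-1}$ immediately gives $((h_d)_{(d)})^{-1}_0=h_d-h_{d-1}$. The assumption $((h_d)_{(d)})^{+1}_{+1}=h_{d+1}$ identifies the $(d+1)$-binomial expansion of $h_{d+1}$ as $\sum\binom{n_i+1}{i+1}$, and the same termwise Pascal manipulation yields both $((h_{d+1})_{(d+1)})^{-1}_0=h_{d+1}-h_d$ and $((h_d-h_{d-1})_{(d)})^{+1}_{+1}=h_{d+1}-h_d$. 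Applying Green's inequality (Theorem \ref{MGG}(iii)) to $A$ in degrees $d$ and $d+1$ then translates, via the displayed identity, into $s'_{d-1}\leq \alpha$ and $s'_d\leq 0$, forcing $s'_d=0$.

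Next I would apply Macaulay's bound (Theorem \ref{MGG}(i)) to $A/LA$ from degree $d$ to $d+1$: $H'_{d+1}\leq ((H'_d)_{(d)})^{+1}_{+1}$. With $s'_d=0$, the left-hand side is exactly $h_{d+1}-h_d$, while $H'_d=h_d-h_{d-1}-\alpha+s'_{d-1}$. If $s'_{d-1}<\alpha$, then $H'_d<h_d-h_{d-1}$, and strict monotonicity of the operation $n\mapsto((n)_{(d)})^{+1}_{+1}$ combined with the identity $((h_d-h_{d-1})_{(d)})^{+1}_{+1}=h_{d+1}-h_d$ forces $((H'_d)_{(d)})^{+1}_{+1}<h_{d+1}-h_d$, contradicting Macaulay. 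Thus $s'_{d-1}=\alpha$ for every general $L$.

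Finally I would upgrade $(0:L)_{d-1}$ to the socle using the vanishing $s'_d=0$. For any $a\in (0:L)_{d-1}$ and any linear form $\ell\in A_1$, one computes $L(\ell a)=\ell(La)=0$, so $\ell a\in (0:L)_d=0$; hence $\ell a=0$ for every $\ell$, i.e., $a\in soc(A)_{d-1}$. Thus $(0:L)_{d-1}\subseteq soc(A)_{d-1}$, and the reverse inclusion is immediate, so $soc(A)_{d-1}=(0:L)_{d-1}$ has dimension $\alpha>0$, proving in particular that $A$ has depth zero. The main obstacle is the bookkeeping of exactly when each inequality of Green and Macaulay tightens to an equality under the two hypotheses on the $h$-vector; the closing ``socle upgrade'' works precisely because the second hypothesis $((h_d)_{(d)})^{+1}_{+1}=h_{d+1}$ has first been exploited to kill $s'_d$, which is what makes the calculation $L(\ell a)=\ell(La)=0$ decisive.
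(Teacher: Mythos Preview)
Your argument is correct. The paper itself does not prove this statement; it is quoted verbatim from \cite{Za4}, Theorem 3.5, so there is no in-paper proof to compare against. Your route---combining the multiplication-by-$L$ exact sequence with Green's bound in degrees $d$ and $d+1$ to force $s'_d=0$ and $s'_{d-1}\le\alpha$, then Macaulay's bound on $A/LA$ to push $s'_{d-1}$ up to $\alpha$, and finally the observation that $(0:L)_d=0$ upgrades $(0:L)_{d-1}$ to genuine socle---is exactly the circle of ideas behind the original result, and each step checks out. Two small points worth making explicit in a final write-up: the strict monotonicity of $n\mapsto((n)_{(d)})^{+1}_{+1}$ is standard but deserves a citation, and the identity $((h_d-h_{d-1})_{(d)})^{+1}_{+1}=h_{d+1}-h_d$ requires noting that $\sum\binom{n_i-1}{i}$ (after dropping vanishing bottom terms) really is the $d$-binomial expansion of $h_d-h_{d-1}$, which follows since $n_j=j$ forces $n_{j+1}=j+1$, etc., so only an initial string of terms can vanish.
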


We also need the following  decomposition theorem shown in \cite{GHMS}, Lemma 2.8 and Theorem 2.10.

\begin{theorem}\label{decomp}
Let  $h=(1,h_1,\dots,h_e=t)$ be the $h$-vector of a level algebra $A$ of type $t$, and let $h'=(1,h_1',\dots,h_e'=t-1)$ be the $h$-vector of any level quotient of $A$ of type $t-1$ and the same socle degree, $e$. Then the reverse of the difference of $h$ and $h'$, namely $(1,h_{e-1}-h'_{e-1},h_{e-2}-h'_{e-2}, \dots)$, is an $O$-sequence.
\end{theorem}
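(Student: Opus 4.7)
The plan is to identify the reverse-difference sequence as the Hilbert function of an auxiliary standard graded artinian algebra, by means of Macaulay's inverse system. Let $R = k[x_1,\ldots,x_r]$ and let $S = k[X_1,\ldots,X_r]$ be the graded $R$-module on which $R$ acts by contraction (this formulation of Macaulay duality is characteristic-free). There is an inclusion-reversing bijection between homogeneous ideals $I \subseteq R$ and finitely generated graded $R$-submodules $M \subseteq S$, via $I \mapsto I^{\perp} := \{F \in S : I \cdot F = 0\}$, with $\dim_k (R/I)_d = \dim_k (I^{\perp})_d$. Translating the hypotheses: $A = R/I$ is level of type $t$ with socle degree $e$ precisely when $M_A := I^{\perp}$ is generated as an $R$-module by a $t$-dimensional subspace $V_A \subseteq S_e$; a level quotient $B = A/J$ of type $t-1$ with the same socle degree then corresponds to a submodule $M_B := J^{\perp} \subseteq M_A$ generated by a $(t-1)$-dimensional subspace $V_B \subseteq V_A$.

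Pick any $F \in V_A \setminus V_B$, so that $V_A = V_B \oplus kF$, and consider the quotient $R$-module $N := M_A / M_B$, whose Hilbert function satisfies $\dim_k N_i = h_i - h'_i$. Because $M_A = R \cdot V_A = R \cdot V_B + R \cdot F$ and $R \cdot V_B \subseteq M_B$, the module $N$ is cyclic, generated in degree $e$ by the image $\overline{F}$ of $F$. Letting $\mathfrak{a} = \mathrm{Ann}_R(\overline{F})$, the map $R \to N$ sending $r \mapsto r \cdot \overline{F}$ is surjective with kernel $\mathfrak{a}$, and carries $R_d$ onto $N_{e-d}$ (since contraction lowers degree by $d$). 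Hence $R/\mathfrak{a}$ is a standard graded artinian $k$-algebra with
\[
\dim_k (R/\mathfrak{a})_d \;=\; \dim_k N_{e-d} \;=\; h_{e-d} - h'_{e-d} \qquad (d = 0, 1, \ldots, e),
\]
which, starting from $h_e - h'_e = 1$, is exactly the reversed difference $(1,\, h_{e-1} - h'_{e-1},\, h_{e-2} - h'_{e-2},\, \ldots)$. Being the Hilbert function of a standard graded $k$-algebra, it is an $O$-sequence by Macaulay's theorem, Theorem~\ref{MGG}(i).

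The step requiring care is the inverse-system dictionary set up at the start: one must verify that whenever $B$ is a level quotient of $A$ of type $t-1$ with the same socle degree $e$, the submodule $M_B \subseteq M_A$ is automatically generated in the single degree $e$ by a codimension-one subspace of $V_A$. The containment $M_B \subseteq M_A$ encodes that $B$ is a quotient of $A$, while having all minimal generators concentrated in degree $e$ (with precisely $t-1$ of them) encodes that $B$ is level of type $t-1$ with socle in degree $e$. Once this translation is in place, the cyclicity of $M_A/M_B$—which is exactly what makes the reversed difference the Hilbert function of an honest algebra rather than merely of a module—follows from the one-dimensionality of $V_A/V_B$, and the remainder of the argument is formal.
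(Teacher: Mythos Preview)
Your argument is correct. The paper itself does not supply a proof of this theorem but instead defers to \cite{GHMS}, Lemma~2.8 and Theorem~2.10; the inverse-system argument you give---showing that $M_A/M_B$ is cyclic over $R$ because $\dim_k V_A/V_B = 1$, and then reading off the reversed difference as the Hilbert function of $R/\mathrm{Ann}_R(\overline{F})$---is essentially the argument that appears there.
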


Finally, the following result will be useful.

\begin{theorem} \label{increase ab}
If $(1,r,a,r,1)$ is a Gorenstein $h$-vector, then so are $(1,r,b,r,1)$ for each $a \leq b \leq \binom{r+1}{2}$, and $(1,r+1,a+1,r+1,1)$. Similarly, if $(1,r,a,a,r,1)$ is Gorenstein, then so are $(1,r,b,b,r,1)$ for each $a \leq b \leq \binom{r+1}{2}$, and $(1,r+1,a+1,a+1,r+1,1)$.
\end{theorem}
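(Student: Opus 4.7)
Proof proposal. The theorem packages four assertions: for socle degree $4$, raising the middle entry within a fixed codimension and raising the codimension by one; and the analogous statements for socle degree $5$. My plan is to handle both move types uniformly via Macaulay's inverse system, using the characteristic-free contraction action of $R = k[x_1, \ldots, x_r]$ on $S = k[y_1, \ldots, y_r]$. Every artinian Gorenstein quotient of $R$ arises as $R/\mathrm{Ann}(F)$ for some nonzero $F \in S$ of degree $e$, with $h_i(R/\mathrm{Ann}(F)) = \dim_k R_i \circ F$, where $\circ$ is the contraction action.

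\emph{Raising the codimension.} Let $F \in S$ of degree $e \in \{4, 5\}$ realize the given $h$-vector. Adjoin $z$ to $S$ (so $S' = S[z]$) and $x_{r+1}$ to $R$, and set $F' := F + z^e$. Decomposing elements of $R'_i$ by $x_{r+1}$-degree and noting that $F$ and $z^e$ involve disjoint sets of $y$-variables, one checks that for $0 < i < e$ the span $R'_i \circ F'$ is exactly the direct sum $(R_i \circ F) \oplus k\, z^{e-i}$, while the two endpoints give one-dimensional spans. Hence the new $h$-vector is $(1, r+1, a+1, r+1, 1)$ or $(1, r+1, a+1, a+1, r+1, 1)$, as required.

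\emph{Raising the middle entry.} By induction on $b - a$, it suffices to prove the increment $a \to a+1$ when $a < \binom{r+1}{2}$. Pick a sufficiently general linear form $\ell \in S_1$ and set $F'' := F + G_\ell$, where $G_\ell \in S_e$ is a form whose apolar algebra is $k[t]/(t^{e+1})$ (in characteristic zero one may take $G_\ell = \ell^e$; in general one uses the divided-power version, which makes $R_i \circ G_\ell$ one-dimensional in each degree $0 \leq i \leq e$). The key rank computation is at degree $i = 2$: the image of $D \mapsto D \circ F'' = D \circ F + D \circ G_\ell$ lies in the sum of $R_2 \circ F$ and the one-dimensional subspace spanned by a degree-$(e-2)$ contraction of $G_\ell$. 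Under two genericity conditions---(i) this degree-$(e-2)$ contraction does not lie in $R_2 \circ F$, and (ii) the scalar-valued functional $D \mapsto D \circ G_\ell$ does not vanish identically on $\mathrm{Ann}(F)_2$---the image has dimension exactly $a + 1$, so $h_2(F'') = a + 1$. At degree $i = 1$, injectivity of $D \mapsto D \circ F$ (from $h_1(F) = r$) persists under the generic perturbation, so $h_1(F'') = r$. Gorenstein symmetry of $R/\mathrm{Ann}(F'')$ then pins down the remaining entries, yielding $(1, r, a+1, r, 1)$ or $(1, r, a+1, a+1, r, 1)$.

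\emph{Main obstacle.} The delicate step is verifying the two genericity conditions (i) and (ii) at $i = 2$. Both are Zariski-open in $\ell \in S_1$ and both hold on a nonempty open set exactly when $a < \binom{r+1}{2}$ (so that $R_2 \circ F \subsetneq S_{e-2}$ and $\mathrm{Ann}(F)_2 \neq 0$). The second condition reduces to the fact that no nonzero $D \in \mathrm{Ann}(F)_2$ can annihilate every $G_\ell$ as $\ell$ varies over $S_1$, which follows from standard properties of the inverse system. The bookkeeping is particularly delicate in positive characteristic due to the need for divided powers, since $\ell^e$ does not enjoy the clean contraction behavior of its characteristic-zero avatar; once the correct $G_\ell$ is in hand, the rank counts reduce to routine linear algebra.
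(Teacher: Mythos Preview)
Your approach via Macaulay's inverse system is exactly what the paper intends; its own proof is simply a pointer to \cite{iarrobino, zanello3} for ``standard inverse system techniques,'' and you have supplied those details correctly.

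One small technical correction worth making: with the contraction action on the \emph{polynomial} ring $S=k[y_1,\ldots,y_r]$ as you set it up, the form $\ell^e$ does \emph{not} have one-dimensional contractions even in characteristic zero (e.g.\ $x_1\circ(y_1+y_2)^2=y_1+2y_2$ and $x_2\circ(y_1+y_2)^2=2y_1+y_2$ are independent). The clean, characteristic-free setup is to let $S$ be the divided-power ring from the start; then $D\circ\ell^{[e]}=D(\ell_1,\ldots,\ell_r)\,\ell^{[e-i]}$ for every $D\in R_i$, and your genericity conditions (i), (ii), together with the check that $h_1(F'')=r$ (which follows since $\ell^{[e-1]}\notin R_1\circ F$ for generic $\ell$, as the divided powers of linear forms span $S_{e-1}$), all go through exactly as you sketch.
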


\begin{proof}
This follows from standard inverse system techniques. See for example \cite{iarrobino, zanello3}.
\end{proof}

\section{Main Results}

In socle degree 4, the only codimension in which it is unknown whether a nonunimodal Gorenstein $h$-vector exists is 12. We now settle this question.

\begin{proposition} \label{1 12 11 12 1}
The $h$-vector $(1,12,11,12,1)$ is not  Gorenstein.
\end{proposition}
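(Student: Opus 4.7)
The plan is to argue by contradiction: suppose $A=R/I$ is Gorenstein with $h_A=(1,12,11,12,1)$, where $R=k[x_1,\ldots,x_{12}]$, and derive an impossibility. The strategy is to restrict the numerical possibilities, via Macaulay and Green, down to a single configuration, and then rule out this final case using Theorem~\ref{F socle} together with the Macaulay inverse system.

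First, take a general linear form $L\in A_1$. Then $C=A/(0:_A L)$ is Gorenstein of socle degree~$3$ with Hilbert function $(1,c,c,1)$ for some $c\geq 1$, so $h_{0:_AL}=(0,12-c,11-c,11,1)$. The standard multiplication-by-$L$ exact sequence then yields $h_B=(1,11,11-c,12-c,0)$, where $B:=A/LA$. Green's theorem applied to $A$ at degrees~$2$ and $3$ gives $h_B(2)\leq ((11)_{(2)})_0^{-1}=6$ and $h_B(3)\leq ((12)_{(3)})_0^{-1}=4$, forcing $c\geq 8$; Macaulay's theorem applied to $h_B$ itself, namely $h_B(3)\leq ((h_B(2))_{(2)})_{+1}^{+1}$, then eliminates the cases $c=9,10,11$. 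Only $c=8$ survives, so $h_B=(1,11,3,4,0)$.

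Next I would extract the structural content of this remaining case. Since $(h_B(2),h_B(3))=(3,4)$ saturates Macaulay with $((3)_{(2)})_{-1}^{-1}=2$, Theorem~\ref{F socle} with $d=2$ and $\alpha=11-2=9$ shows that $B$ has a $9$-dimensional socle in degree~$1$. Lifting this socle to $A$ produces a $10$-dimensional subspace $S\subseteq A_1$ containing $L$ with $S\cdot A_1\subseteq L\cdot A_1$; since $L\in S$ this upgrades to $S\cdot A_i=L\cdot A_i$ for all $i\geq 1$, whence $h_{A/SA}=(1,2,3,4,0)$. Because this equals $(1,\dim k[y_1,y_2]_1,\dim k[y_1,y_2]_2,\dim k[y_1,y_2]_3,0)$, the ideal of $A/SA$ in the relevant $2$-variable polynomial ring must be $(y_1,y_2)^4$, so $A/SA\cong k[y_1,y_2]/(y_1,y_2)^4$.

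The main obstacle is then to derive a contradiction from the existence of $A$ together with such a quotient. Writing $A=R/\operatorname{Ann}(F)$ for a quartic $F\in k[y_1,\ldots,y_{12}]_4$ via the Macaulay inverse system, the identification $A/SA\cong k[y_1,y_2]/(y_1,y_2)^4$ forces $k[y_1,y_2]_{\leq 3}$ to sit inside $R\circ F$ as a sub-$R$-module, while the condition $(A/SA)_4=0$ gives $F\notin k[y_1,y_2]_4$. By Gorenstein duality, the ideal $0:_A SA$ has Hilbert function $(0,4,3,2,1)$, and its degree-$1$ part $T$ agrees with $(0:_A L)_1$ and satisfies $T\circ F=k[y_1,y_2]_3$. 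The contradiction is then to be extracted by playing the non-degeneracy of the Gorenstein pairings $A_1\otimes A_3\to A_4$ and $A_2\otimes A_2\to A_4$ against the splittings $A_1=\langle u_1,u_2\rangle\oplus S$, $A_2=\langle u_1^2,u_1u_2,u_2^2\rangle\oplus L\cdot A_1$, and $A_3=\langle u_1^au_2^b:a+b=3\rangle\oplus L\cdot A_2$ induced by the isomorphism $A/SA\cong k[y_1,y_2]/(y_1,y_2)^4$, together with the constraint $h_A(2)=11$, which forces $I$ to contain $67$ independent quadrics; making this incompatibility precise, rather than merely suggestive, is the technical heart of the argument.
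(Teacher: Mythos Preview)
Your reduction to the unique decomposition
\[
h_{A}=(1,12,11,12,1),\qquad h_{A/(0:L)}=(1,8,8,1),\qquad h_{B}=(1,11,3,4,0)
\]
is correct and coincides with the paper's first step. Your use of Theorem~\ref{F socle} on $B$ to produce a $9$-dimensional degree-$1$ socle in $B$, and the resulting $10$-dimensional $S\subseteq A_1$ with $SA_i=LA_i$ for $i\ge 1$ and $A/SA\cong k[y_1,y_2]/(y_1,y_2)^4$, is also correct.

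The gap is in the last paragraph. You do not actually derive a contradiction; you say that it ``is then to be extracted'' from the perfect pairings and the count of $67$ quadrics, and you explicitly flag that ``making this incompatibility precise, rather than merely suggestive, is the technical heart of the argument.'' As written, this is a plan, not a proof. Moreover, the structural datum you have isolated --- a quotient $A/SA$ with Hilbert function $(1,2,3,4,0)$ --- does not by itself contradict Gorensteinness: it is essentially the ``plane'' that the paper also finds, and the paper still needs a nontrivial further argument to finish. Nothing in your splittings $A_1=\langle u_1,u_2\rangle\oplus S$, etc., together with nondegeneracy of the pairings, forces $h_2>11$ or produces illicit socle without additional input.

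For comparison, after the same numerical reduction the paper passes to $J=(I_{\le 3})$ rather than staying inside $A$. Gotzmann's theorem applied to the maximal growth $3\to 4$ in $S/\bar J$ shows $\bar J$ is (in degrees $\ge 2$) the saturated ideal of a line, hence $J^{\mathrm{sat}}$ defines a plane in $\mathbb P^{11}$ plus $m$ points; one then bounds $15+m\le h_{R/J}(4)\le 17$, so $m\le 2$, and treats each value of $h_{R/J}(4)$ separately. The cases $h_{R/J}(4)=17$ and $h_{R/J}(4)=15$ are handled by Theorem~\ref{F socle} (socle in degree~$2$) and by a saturation argument (socle in degree~$3$), respectively; the delicate case $h_{R/J}(4)=16$ with $m=0$ is dispatched by Theorem~\ref{decomp}, since the reversed difference $(1,2,5,9)$ of $(1,12,11,12,16)$ and the truncated plane $(1,3,6,10,15)$ is not an $O$-sequence. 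Your argument has no analogue of this degree-$4$ case analysis, and in particular never invokes Theorem~\ref{decomp}, which is where the genuine obstruction lives. If you want to salvage your approach, you will need to introduce an object that sees degree~$4$ (e.g.\ $J=(I_{\le 3})$) and carry out a comparable case split.
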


\begin{proof}
Let $R = k[x_1,\dots,x_{12}]$ and suppose that $I \subset R$ were a Gorenstein ideal with $h$-vector $(1,12,11,12,1)$.  Let $J = ( I_{\leq 3} )$, the ideal generated by the components of $I$ in degrees 2 and 3. Let $L$ be a general linear form,  $S = R/(L)$, and  $\bar J = \frac{J,L}{(L)} \subset S$ (and similarly for $\bar I$). Employing the exact sequence
\[
0 \rightarrow R/(I:L)(-1) \rightarrow R/I \rightarrow S/\bar I \rightarrow 0
\]
and applying Theorem \ref{MGG}, (iii) (Green's theorem), Theorem \ref{MGG}, (i) (Macaulay's theorem) and the symmetry of the  $h$-vector of the Gorenstein algebra $R/(I:L)$, it is not hard to check that the rows of the following diagram represent the only possible values for the Hilbert functions of $R/I$, $R/(I:L)(-1)$ and $S/\bar I$ respectively:

\begin{center}

\begin{tabular}{ccccc}
1 & 12 & 11 & 12 & 1 \\
& 1 & 8 & 8 & 1 \\ \hline
1 & 11 & 3 & 4
\end{tabular}

\end{center}

\noindent In degrees $\leq 3$ the above is also correct replacing $I$ by $J$. Because $S/\bar J$ has maximal growth (according to Macaulay's theorem) from degree 2 to degree 3 and $\bar J$ has no new generators in degree $\geq 4$, we obtain from Theorem \ref{MGG}, (ii) (Gotzmann's theorem) that $h_{S/\bar J}(t) = t+1$ for all $t \geq 2$. Thus $\bar J$ is the saturated ideal of a line in $\mathbb P^{10}$ in all degrees $\geq 2$. It follows that, up to saturation, $J$ is the ideal of the union  in $\mathbb P^{11}$ of a plane and a finite number, say $m$, of points (possibly embedded) --- see for instance \cite{BGM}. 

\medskip

\noindent {\em Claim:} \ $15 +m \leq h_{R/J}(4) \leq 17$.  In particular,  $0 \leq m \leq 2$.

The upper bound is given by Macaulay's theorem, since $h_{R/J}(3) = 12$. The lower bound follows from the fact that $h_{R/J}(4) \geq h_{R/J^{sat}}(4) = \binom{6}{2} + m $.

\medskip

\noindent \underline{Case 1}: $m = 2$. Then $h_{R/J} (4) = 17$. We have  $(12_{(3)})^{-1}_{-1} = 8$ and $(12_{(3)})^{+1}_{+1} = 17$, while $h_{R/J}(2) = 11 > 8$. Hence Theorem \ref{F socle} gives that $R/J$ has a 3-dimensional socle  in degree 2, contradicting the assumption that $R/I$ is Gorenstein. 

\medskip

\noindent \underline{Case 2}: $m = 1$. If $h_{R/J} (4) = 17$ then the same argument as in Case 1 applies. If $h_{R/J} (4) = 16$ then $J$ is saturated in degree 4 but not in degree 3 (since $h_{R/ J^{sat}}(3) = 10+1 = 11 < 12 = h_{R/J}(3)$). Therefore $R/J$ has a socle element in degree 3. Since $R/I$ is a quotient of $R/J$, it follows that $R/I$ also has a socle element in degree 3, contradicting the Gorenstein assumption. 

\medskip

\noindent \underline{Case 3}: $m = 0$. Now $15 \leq h_{R/J}(4) \leq 17$. If $h_{R/J}(4) = 17$, the same argument as in Case 1 applies. If $h_{R/J}(4) = 15$, again $J$ is saturated in degree 4 so the same argument as in Case 2 applies. If $h_{R/J}(4) = 16$, the result follows from Theorem \ref{decomp}, since the truncated Hilbert function of the plane is $(1,3,6,10,15)$, which is level, and  $(16-15, 12-10, 11-6, 12-3) = (1,2,5,9)$ is not an $O$-sequence. 
\end{proof}

Recall that $(1,13,12,13,1)$ was shown by Stanley \cite{stanley} to be a Gorenstein $h$-vector. From Stanley's example one can use Theorem \ref{increase ab} to construct Gorenstein $h$-vectors of the form $(1,r,r-1,r,1)$, for all $r \geq 13$. Furthermore, it is well known that all unimodal symmetric $h$-vectors with $h_2 \leq \binom{r+1}{2}$ are Gorenstein in socle degrees 4 and 5 (for instance it follows from Theorem \ref{increase ab}). Thus, we obtain the following characterization.

\begin{theorem} \label{1st main}

The Gorenstein $h$-vectors $(1,r,h_2,r,1)$ of socle degree 4 and codimension $\leq 17$ are precisely the ones with $r \leq h_2 \leq \binom{r+1}{2}$, together with $(1,13,12,13,1)$, $(1,14,13,14,1)$, $(1,15,14,15,1)$, $(1,16,15,16,1)$ and $(1,17,16,17,1)$. In particular, nonunimodal Gorenstein $h$-vectors of socle degree 4 and codimension $r$ exist if and only if $r \geq 13$. 

\end{theorem}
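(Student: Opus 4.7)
\emph{Setup and easy cases.} The upper bound $h_2 \leq \binom{r+1}{2}$ is Macaulay (Theorem~\ref{MGG}(i)) in degree~2, and for unimodal Gorenstein $h$-vectors $h_2 \geq r$ is forced by symmetry. Existence of every unimodal $(1,r,h_2,r,1)$ with $r \leq h_2 \leq \binom{r+1}{2}$ is standard (inverse systems, plus the first part of Theorem~\ref{increase ab} to slide $h_2$ up to the Macaulay bound). For the five nonunimodal $h$-vectors, I would start from Stanley's $(1,13,12,13,1)$ and iterate the second part of Theorem~\ref{increase ab} (raising codimension and $h_2$ by one each step) to produce $(1,14,13,14,1)$, $(1,15,14,15,1)$, $(1,16,15,16,1)$ and $(1,17,16,17,1)$. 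Nonexistence of a nonunimodal $h$-vector in codimension $r \leq 11$ is already in the cited literature, Proposition~\ref{1 12 11 12 1} handles $(1,12,11,12,1)$, and the contrapositive of the first part of Theorem~\ref{increase ab} then rules out every $(1,r,a,r,1)$ with $a \leq r-1$ and $r \leq 12$.

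\emph{Nonexistence for $13 \leq r \leq 17$.} It remains to eliminate $(1,r,a,r,1)$ with $a \leq r-2$; by Theorem~\ref{increase ab} it suffices to treat $a = r-2$ for each of the five codimensions. My plan is to imitate Proposition~\ref{1 12 11 12 1}. Assuming $R/I$ is Gorenstein with $h$-vector $(1,r,r-2,r,1)$, I would set $J = (I_{\leq 3})$, take a general linear form $L$, and let $S = R/(L)$. Since $R/(I{:}L)$ is Gorenstein of socle degree~3, its $h$-vector is $(1,s,s,1)$ for some $s$, and the standard exact sequence $0 \to R/(I{:}L)(-1) \to R/I \to S/\bar I \to 0$ forces $h_{S/\bar I} = (1,\,r-1,\,r-2-s,\,r-s)$. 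Green's theorem (Theorem~\ref{MGG}(iii)) applied in degrees~2 and~3, combined with Macaulay's theorem, should cut $s$ down to a very short list per codimension. For each surviving $s$, $h_{S/\bar J}$ will realize maximal Macaulay growth in degree~2 or~3, and so by Gotzmann's persistence (Theorem~\ref{MGG}(ii)) $\bar J$ must be the saturated ideal of a low-degree curve in $\pp^{r-2}$. Lifting back to $\pp^{r-1}$, $J$ is, up to saturation, the ideal of a linear subspace together with a bounded number of possibly embedded points. Bounding $h_{R/J}(4)$ from above by Macaulay and from below by the saturation, and then applying Theorem~\ref{F socle} (to detect an illegal socle element in degree~2 or~3) or Theorem~\ref{decomp} (to check that a residual reverse difference fails to be an $O$-sequence), as in the three cases of Proposition~\ref{1 12 11 12 1}, should yield a contradiction in every subcase.

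\emph{The hard part.} The main obstacle will be the case bookkeeping: for each of the five pairs $(r, r-2)$ the Green and Macaulay inequalities leave several candidate values of $s$, and for each $s$ the appropriate geometric interpretation of $\bar J$ and the correct choice between Theorems~\ref{F socle} and~\ref{decomp} have to be singled out. Since the socle degree is fixed and all codimensions in play are small, I expect the analysis to remain finite and close in spirit to the three-case argument in the proof of Proposition~\ref{1 12 11 12 1}.
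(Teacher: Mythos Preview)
Your setup and easy cases are exactly right and match the paper. The substantive difference is in the nonexistence step for $13 \leq r \leq 17$: you plan five separate Green--Macaulay--Gotzmann analyses (one for each $(1,r,r-2,r,1)$), whereas the paper observes that a \emph{single} case suffices. The point you are underusing is the second part of Theorem~\ref{increase ab} in contrapositive form: if $(1,17,15,17,1)$ is not Gorenstein, then neither is $(1,16,14,16,1)$, hence neither is $(1,15,13,15,1)$, and so on down to $(1,13,11,13,1)$; combining this with the first part then kills every $(1,r,a,r,1)$ with $a \leq r-2$ and $r \leq 17$. So the paper only has to rule out $(1,17,15,17,1)$, and it does so with essentially the same argument as Proposition~\ref{1 12 11 12 1}, except that the maximal growth on the bottom row now yields a quadric surface rather than a plane, and only two values of $h_{R/J}(4)$ (namely $25$ and $26$) need to be excluded.

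Your five-case plan would still work in principle, since each $(1,r,r-2,r,1)$ is amenable to the same kind of analysis, but it is redundant labor: the monotonicity in Theorem~\ref{increase ab} collapses the whole range to its top end. This is worth internalizing, as the same reduction is used again in the socle degree~5 classification (Theorem~\ref{2nd main}), where only $(1,25,22,22,25,1)$ has to be excluded rather than one case per codimension.
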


\begin{proof}
Thanks again to Theorem \ref{increase ab}, the only missing ingredient is the non-existence of \linebreak $(1,17,15,17,1)$. This can be shown using the same ideas as above, except that instead of a plane we obtain a quadric surface. We sketch the argument.

Using symmetry, Green's theorem and Macaulay's theorem, the following diagram represents the only possible values for the Hilbert functions of $R/I$, $R/(I:L)$ and $R/(I,L)$:

\begin{center}

\begin{tabular}{ccccc}
1 & 17 & 15 & 17 & 1 \\
& 1 & 10 & 10 & 1 \\ \hline
1 & 16 & 5 & 7
\end{tabular}

\end{center}

\noindent The last two values on the bottom row represent maximal growth according to Macaulay's theorem, so setting $J = ( I_{\leq 3} )$, we see that $J$ defines a quadric surface together with, say, $m$ points (possibly embedded). Taking into account the maximal possible growth, we see that $25 \leq h_{R/J}(4) \leq 26$. If $h_{R/J}(4) = 25$ then $m = 0$ and $J$ is saturated in degree 4, while $J$ is not saturated in degree 3. Thus $R/J$, and hence $R/I$, has socle in degree 3 and we are done. If $h_{R/J}(4) = 26$ then the growth is maximal from degree 3 to degree 4, so again using Theorem \ref{F socle}   we obtain socle in degree 2. This completes the proof.
\end{proof}

\begin{remark}
As a result of Theorem \ref{1st main}, we see that Stanley's original $(1,13,12,13,1)$ example is the smallest possible, as measured by the codimension among socle degree 4 Gorenstein $h$-vectors, and as it can easily be verified, also as measured by the dimension as a $k$-vector space of the Gorenstein algebra.
\end{remark}

We now turn to socle degree 5. 
As noted earlier, in order to show that 17 is the least codimension allowing a nonunimodal Gorenstein $h$-vector in socle degree 5, we need only determine the Gorensteinness (or not) of $H^{(1)}=(1,16,15,15,16,1)$, $H^{(2)}=(1,17,15,15,17,1)$, and $H^{(3)}=(1,17,16,16,17,1)$. We now solve this problem.

Suppose first that $R/I$ were a Gorenstein algebra with $h$-vector $(1,17,15,15, 17,1)$, and let $J = ( I_{\leq 4} )$. Let $L$, $S$, $\bar I$ and $\bar J$ be as before, and we use a diagram similar to the one above. Using Green's theorem and the values of $h_{R/I}$, we obtain the following upper bounds for $h_{S/\bar I}$, and consequently lower bounds for $h_{R/(I:L)}(-1)$. 

\begin{center}

\begin{tabular}{ccccccccc}
& 1 & 17 & 15 & 15 & 17 & 1 \\
$\geq$ && 1 & 5 & 9 & 12 & 1 \\ \hline
$\leq$ & 1 & 16 & 10 & 6 & 5
\end{tabular}

\end{center}

\noindent But $R/(I:L)$ is Gorenstein, so the middle row has to be symmetric. Thus $h_{R/(I:L)}(1) \geq 12$ and we obtain

\begin{center}

\begin{tabular}{ccccccccc}
& 1 & 17 & 15 & 15 & 17 & 1 \\
$\geq$ && 1 & 12 & 9 & 12 & 1 \\ \hline
$\leq$ & 1 & 16 & 3 & 6 & 5
\end{tabular}

\end{center}

\noindent This forces

\begin{center}

\begin{tabular}{ccccccccc}
& 1 & 17 & 15 & 15 & 17 & 1 \\
$\geq$ && 1 & 12 & 11 & 12 & 1 \\ \hline
$\leq$ & 1 & 16 & 3 & 4 & 5
\end{tabular}

\end{center}

\noindent We saw in Proposition \ref{1 12 11 12 1} that no Gorenstein algebra exists with Hilbert function given by the middle row. If the middle row takes higher values (preserving symmetry), we reach a violation of maximal growth on the bottom row. Hence $(1,17,15,15,17,1)$ is not Gorenstein.

We now show that the $h$-vector $(1,16,15,15,16,1)$ is not Gorenstein. Suppose that $R/I$ were a Gorenstein algebra with this Hilbert function, and let $J = ( I_{\leq 4} )$. One uses an argument very similar to the one above, together with the fact that if $(1, 11, b, 11, 1)$ is Gorenstein then $b \geq 11$, to show that the only possible diagram for $R/I$ is

\begin{center}

\begin{tabular}{ccccccccc}
1 & 16 & 15 & 15 & 16 & 1 \\
& 1 & 11 & 11 & 11 & 1 \\ \hline
1 & 15 & 4 & 4 & 5
\end{tabular}

\end{center}

\noindent From Macaulay's theorem and our observation above about the saturation, we obtain $21 \leq h_{R/J}(5) \leq 22$. Analogously to the argument above, $J^{sat}$ is the ideal of a plane plus $m$ points (possibly embedded), where $m = 0$ or 1. If $h_{R/J}(5) = 21$ then $m=0$, $J$ is saturated in degree 5, and $R/I$ has socle in degree 4. If $h_{R/J}(5) = 22$ then it follows from maximal growth and Gotzmann's theorem that 
$m=1$ and $J$ is saturated both in degree 5 and degree 4. However, it is not saturated in degree 3 so we obtain socle in degree 3. Either way, we obtain a contradiction to the Gorenstein assumption. 

On the other hand, we observe that $(1,17,16,16,17,1)$ is Gorenstein, by constructing it with trivial extensions, just as Stanley \cite{stanley} used to construct his famous example $(1,13,12,13,1)$. Indeed, it is enough to apply trivial extensions to the level $h$-vector $(1,3,6,10,14)$, so
\[
\begin{array}{ccccccccc}
1 & 3 & 6 & 10 & 14 \\
& 14 & 10 & 6 & 3 & 1 \\ \hline
1 & 17 & 16 & 16 & 17 & 1
\end{array}
\]
gives the desired Gorenstein $h$-vector.

We  can thus show the following characterization  in socle degree 5, again using the fact that in socle degree $\leq 5$ we know precisely which are the unimodal Gorenstein $h$-vectors.

\begin{theorem} \label{2nd main}
The Gorenstein $h$-vectors $(1,r,h_2,h_2,r,1)$ of socle degree 5 and codimension $r\leq 25$ are precisely the ones with $r \leq h_2 \leq \binom{r+1}{2}$, together with  $(1,r,r-1,r-1,r,1)$ for $17 \leq r \leq 25$ and $(1,r,r-2,r-2,r,1)$ for $18 \leq r \leq 25$.  In particular, nonunimodal Gorenstein $h$-vectors of socle degree 5 and codimension $r$ exist if and only if $r \geq 17$. 
\end{theorem}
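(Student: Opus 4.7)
My plan splits into existence and nonexistence. For existence, the unimodal cases $h_2 \geq r$ are standard, as noted in the paper (they follow from Theorem \ref{increase ab}). The nonunimodal case $(1,17,16,16,17,1)$ has just been realized as the trivial extension of the level $h$-vector $(1,3,6,10,14)$; analogously, $(1,18,16,16,18,1)$ (noted as Gorenstein in \cite{AS}) arises as the trivial extension of $(1,3,6,10,15)$. Iterated application of Theorem \ref{increase ab} then produces $(1,r,r-1,r-1,r,1)$ for all $17 \leq r \leq 25$ and $(1,r,r-2,r-2,r,1)$ for all $18 \leq r \leq 25$.

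For nonexistence, $r \leq 15$ is covered by Theorem 3.3 of \cite{AS}. The cases $r = 16$ and $r = 17$ reduce, by the contrapositive of Theorem \ref{increase ab}, to the nonexistence of $(1,16,14,14,16,1)$ (from \cite{AS}), $(1,16,15,15,16,1)$, and $(1,17,15,15,17,1)$, all established just above. For $18 \leq r \leq 25$, I would show that $(1,r,r-3,r-3,r,1)$ is not Gorenstein, which by the contrapositive of Theorem \ref{increase ab} eliminates all remaining excluded $h_2$. The argument for each such $r$ follows the template developed in Proposition \ref{1 12 11 12 1} and in the exclusion of $(1,17,15,15,17,1)$: assume $R/I$ is Gorenstein with this $h$-vector, set $J = (I_{\leq 4})$ with $L$ a general linear form, and observe that $R/(I:L)$ is Gorenstein of socle degree 4, say with $h$-vector $(1,p,q,p,1)$. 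Green's theorem applied to $h_{R/I}$ in degree 4 gives a lower bound on $p$; Macaulay's theorem applied to $h_{S/\bar I}$ further restricts the admissible $(p,q)$; and Theorem \ref{1st main} determines which $(1,p,q,p,1)$ are actually Gorenstein. In combination, these constraints eliminate every admissible $(p,q)$ and yield the desired contradiction.

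The main obstacle is the borderline situation in which the growth of $h_{S/\bar I}$ from degree 3 to degree 4 is exactly maximal. Here Macaulay alone is insufficient, and one invokes Gotzmann's theorem to describe $\bar J$ as the ideal of a low-dimensional scheme (a plane, conic, or pair of lines, possibly with a few additional points), then locates unwanted socle elements of $R/J$ in degrees below 4 via Theorem \ref{F socle} or a direct saturation comparison, exactly as in Proposition \ref{1 12 11 12 1} and in the exclusion of $(1,16,15,15,16,1)$. For $r \leq 25$ the number of such cases remains small, so the bookkeeping is manageable, but the geometric interpretation of the maximal-growth scheme becomes increasingly delicate as $r$ and $p$ grow, and this is where the individual case analyses concentrate.
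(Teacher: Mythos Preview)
Your proposal is correct and follows essentially the same strategy as the paper. One efficiency you miss: by the contrapositive of Theorem~\ref{increase ab} applied in the \emph{codimension} direction (if $(1,r+1,a+1,a+1,r+1,1)$ is not Gorenstein then neither is $(1,r,a,a,r,1)$), the nonexistence of $(1,25,22,22,25,1)$ already rules out $(1,r,r-3,r-3,r,1)$ for every $r\le 25$, so the paper collapses your eight cases $18\le r\le 25$ to the single case $r=25$ --- there the unique admissible middle row is $(1,16,15,16,1)$, the bottom row $(1,24,6,7,9)$ has maximal growth $6\to 7$ giving a quadric surface, and the argument concludes exactly as in Theorem~\ref{1st main}, eliminating the ``increasingly delicate'' bookkeeping you anticipate.
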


\begin{proof}
Since $(1,17,16,16,17,1)$ is Gorenstein but $(1,16,15,15,16,1)$ is not, we obtain the result about $(1,r,r-1,r-1,r,1)$. 
It was shown in \cite{AS} that $(1,18,16,16,18,1)$ is Gorenstein, so using that fact and Theorem \ref{increase ab} we see that the same is true for $(1,r,r-2, r-2,r,1)$ when $r \geq 18$. On the other hand, we saw above that $(1,17,15,15,17,1)$ is not Gorenstein. It remains to prove that $(1,r,r-3,r-3,r,1)$ is not Gorenstein for $r \leq 25$.  Again by Theorem \ref{increase ab}, it is enough to show that $(1,25,22,22,25,1)$ is not Gorenstein. 

Using arguments as above (combining Green's theorem, Macaulay's theorem and symmetry), together with Theorem \ref{1st main}, we obtain

\begin{center}

\begin{tabular}{ccccccccc}
& 1 & 25 & 22 & 22 & 25 & 1 \\
 && 1 & 16 & 15 & 16 & 1 \\ \hline
 & 1 & 24 & 6 & 7 & 9
\end{tabular}

\end{center}

\noindent as the only possible decomposition. The growth from degree 3 to degree 4 is again maximal on the third line, giving a conic, so for $R/J$ we obtain a quadric surface plus possibly a finite number of points, and proceed as before. This completes the proof.
\end{proof}

\begin{remark}
\begin{itemize}

\item[(i)] For socle degree 4 we now know all possible Gorenstein $h$-vectors  that are unimodal or of the form $(1,r,r-1,r,1)$. We saw that the first open case for $(1,r,r-2,r,1)$ is $(1,18,16,18,1)$. One can show that $(1,20,18,20,1)$ comes from trivial extensions using the level $h$-vector $(1,4,9,16)$, so the only other open case is $(1,19,17,19,1)$. For the  cases $r-h_2 = 3$ and $r-h_2 = 4$, using the level $h$-vectors $(1,4,10,19)$ and $(1,4,10,20)$, we obtain,  with trivial extensions, the Gorenstein $h$-vectors $(1,23,20,23,1)$ and $(1,24,20,24,1)$. When $r-h_2 = 3$ we can show that $(1,18,15,18,1)$ does not exist, leaving four open cases. When $r-h_2 = 4$, the same arguments as above rule out $r \leq 22$, and a slightly more subtle argument rules out $r = 23$. It follows that $r - h_2 = 4$ exists if and only if $r \geq 24$.

\medskip

\item[(ii)] For socle degree 5, an obstacle to settling the case $r = 26$ is the fact that we do not know whether $(1,18,16,18,1)$ is Gorenstein for the second row. However, notice that $(1,4,9,16,25)$ and $(1,4,9,16,24)$ are level $h$-vectors, which give by trivial extensions the Gorenstein $h$-vectors $(1,29,25,25,29,1)$ and $(1,28,25,25,28,1)$. Combining this with Theorem \ref{2nd main}, we see that when it comes to  the $h$-vectors of the form $(1,r,r-3,r-3,r,1)$, the only open cases are $(1,26,23,23,26,1)$ and $(1,27,24,24,27,1)$. As for the $h$-vectors of the form $(1,r,r-4,r-4,r,1)$, an argument using cases, similar to that given in Proposition \ref{1 12 11 12 1} (but using a quadric instead of a plane), shows that $(1,27,23,23,27,1)$ does not exist.  (Interestingly, to make this conclusion we have to allow the possibility that $(1,18,16,18,1)$ is Gorenstein, but we show that even if it were to exist, we can complete the argument.) Thus the only open case is $(1,28,24,24,28,1)$.
\end{itemize}
\end{remark}

\begin{remark}\label{shin remark}
In writing this paper, the authors made a conscious decision not to attempt to obtain more general results for higher socle degree, and to rather focus on settling the long-standing open problem that Stanley's original nonunimodal $h$-vector $(1,13,12,13,1)$ is the smallest possible. However, it is clear that the methods that have been brought to bear on the general classification problem addressed here can also be used to prove results in higher socle degree. Indeed, Y.S. Shin has kindly informed us that, after receiving our preprint, he and J. Ahn can now extend Theorem 2.8 of \cite{AS} to a similar statement which includes our Proposition \ref{1 12 11 12 1} as a special case. 
\end{remark}

\section{Acknowledgements} We are grateful to A. Iarrobino for useful comments on the history of the problem, and to Y.S. Shin (see Remark \ref{shin remark}). This work was done while both authors were partially supported by  Simons Foundation grants  (\#309556 for Migliore, \#274577 for Zanello).


\end{document}